\def\ddA{{\rm A}}
\def\ddD{{\rm D}}
\def\Br{{\rm Br}}
\def\BrM{{\rm BrM}}
\def\ddB{{\rm B}}
\def\ddC{{\rm C}}
\def\ddD{{\rm D}}
\def\ddE{{\rm E}}
\def\ddF{{\rm F}}
\def\ddG{{\rm G}}
\def\ddH{{\rm H}}
\def\ddI{{\rm I}}
\newcommand{\cA}{\mathcal{A}}
\newcommand{\R}{\mathbb R}
\newcommand{\Z}{\mathbb Z}
\numberwithin{equation}{section}
\newtheorem{lemma}{Lemma}[section]
\newtheorem{cor}[lemma]{Corollary}
\newtheorem{thm}[lemma]{Theorem}
\theoremstyle{definition}
\newtheorem{defn}[lemma]{Definition}
\theoremstyle{remark}
\newtheorem{rem}[lemma]{Remark}
\begin{document}
\title{Brauer algebras of type $\ddH_3$ and $\ddH_4$}
\author{Shoumin Liu}
\date{}
\maketitle

\begin{abstract}
In this paper, we will present Brauer algebras associated to spherical  Coxeter groups of type $\ddH_3$ and $\ddH_4$, which are also can be regarded
as subalgebras of Brauer algebras  $\ddD_6$ and $\ddE_8$ by M\"uhlherr's admissible partition. Also some basic
properties will be described here.
 \end{abstract}

\section{Introduction}
From studying the invariant theory for orthogonal groups,
Brauer discovered algebras which are now called  Brauer algebras of type $\ddA$ in \cite{Brauer1937};  Cohen, Frenk and Wales extended it to the
definition of simply laced type in \cite{CFW2008}, the nodes of whose  Dynkin diagrams are connected by simple bond.
M\"{u}hlherr  described how to get Coxeter group of type $\ddH_3$($\ddH_4$) by twisting Coxeter group of  type $\ddD_6$($\ddE_8$) in \cite{M1992}.
Here we will apply a similar approach as M\"{u}hlherr on $\Br(\ddD_6)$($\Br(\ddE_8)$), to get an algebra $\Br(\ddH_3)$ ($\Br(\ddH_4)$) called the Brauer algebra of type $\ddH_3$($\ddH_4$).

In fact, M\"{u}hlherr's method can be considered as the generalization of obtaining Weyl groups of non-simply laced types from simply-laced types(\cite{Car}, \cite{T1959}), such as $\ddC_n$ from $\ddA_{2n-1}$, $\ddB_n$ from $\ddD_{n+1}$, $\ddF_4$ from $\ddE_6$. These are motivated by considering the invariant subgroups
under non-trivial  automorphisms by action on their Dynkin  diagrams. We have already utilized it to obtain Brauer algebras of type $\ddC_n$(\cite{CLY2010}),
$\ddB_n$(\cite{CL2012}), $\ddF_4$(\cite{L2013}).
This paper can be regarded as a part of the project of finding Brauer
algebras of non-simply laced types from simply-laced types.

The diagrams of $\ddH_3$, $\ddH_4$, $\ddD_6$, and $\ddE_8$ are presented below,
and the diagrams of $\ddD_6$ and $\ddE_8$ are specially depicted for M\"uhlherr's admissible partition corresponding to the diagram of
$\ddH_3$ and $\ddH_4$.
\begin{figure}
\begin{center}
\includegraphics[width=.7\textwidth,height=.2\textheight]{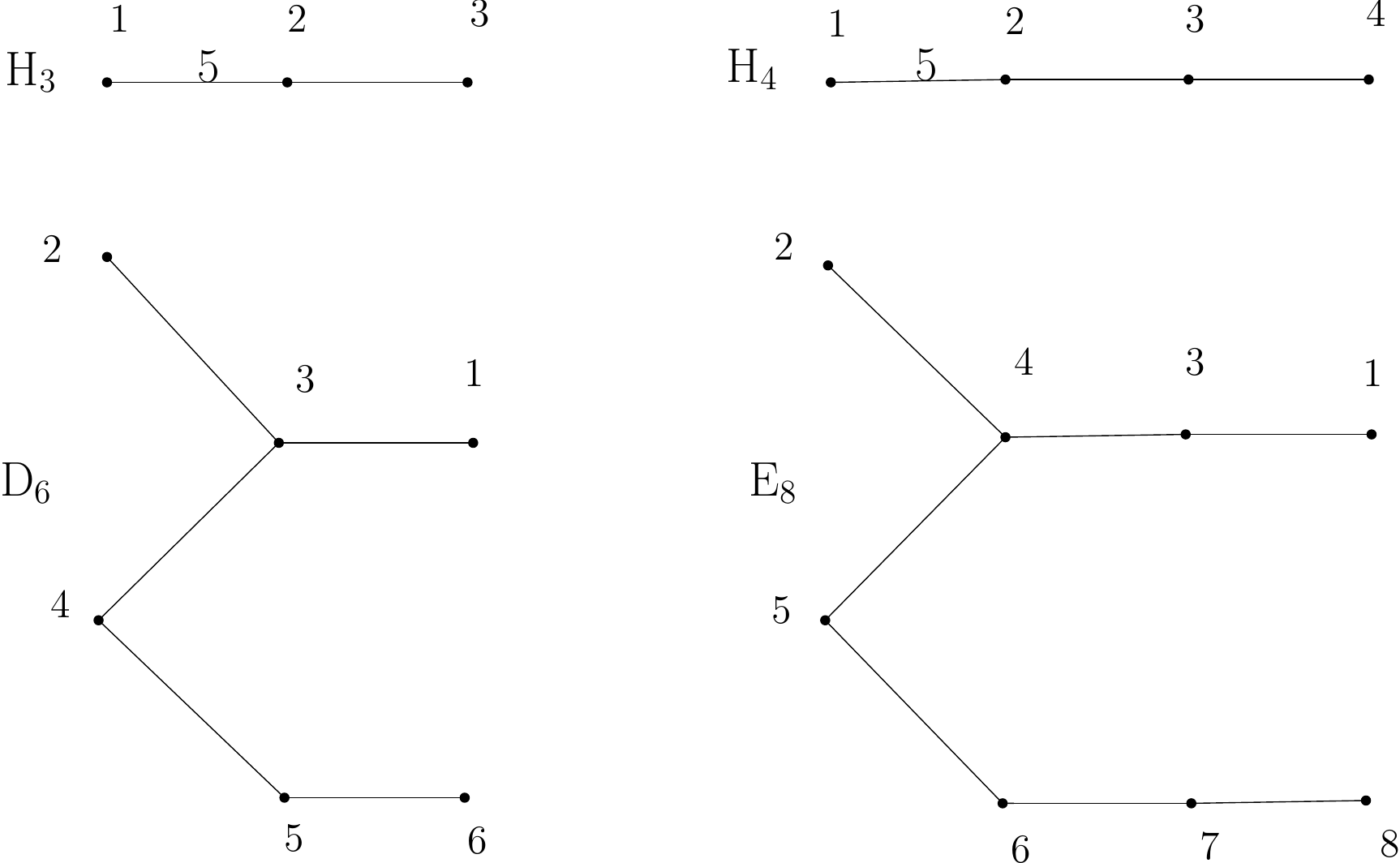}
\end{center}
\caption{Coxeter diagrams of $\ddH_3$,$\ddH_4$, $\ddD_6$ and $\ddE_8$}
\label{H3H4}
\end{figure}

In this paper, we will present the following two main theorems about $\Br(\ddH_3)$ and $\Br(\ddH_4)$, respectively.
To avoid confusion, the generators of $\Br(\ddD_6)$ and $\Br(\ddE_8)$ are capitalized.
\begin{thm}\label{mainthm1} There exists an injective  $\Z[\delta^{\pm 1}]$-algebra homeomorphism
   $$\phi_1:\Br(\ddH_3)\longrightarrow \Br(\ddD_6)$$
   determined by $\phi_1(r_1)=R_2R_4$, $\phi_1(r_2)=R_3R_5$, $\phi_1(r_3)=R_1R_6$, $\phi_1(e_1)=E_2E_4$,
   $\phi_1(e_2)=E_3E_5$ and  $\phi_1(e_3)=E_1E_6$. Furthermore $\Br(\ddH_3)$ are free of rank $1045$ over
   $\Z[\delta^{\pm 1}]$.
 \end{thm}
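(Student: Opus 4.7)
The plan is to establish Theorem~\ref{mainthm1} in three stages: first, show that $\phi_1$ is well-defined by verifying the defining relations of $\Br(\ddH_3)$ on the prescribed images; second, exhibit a spanning set of $\Br(\ddH_3)$ of cardinality at most $1045$; and third, show that the image of this set is $\Z[\delta^{\pm 1}]$-linearly independent in $\Br(\ddD_6)$. The last two together yield both the injectivity of $\phi_1$ and the rank $1045$.

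For well-definedness, the decisive observation is that, in the $\ddD_6$-diagram drawn according to M\"uhlherr's admissible partition, the two nodes inside each block $\{2,4\}$, $\{3,5\}$, $\{1,6\}$ are not adjacent. Hence $R_2$ commutes with $R_4$, and analogously for the other blocks, so each $\phi_1(r_i)$ is an involution and the commutation relation $r_1 r_3 = r_3 r_1$, together with its $e$-variants, transfers immediately from the corresponding relations in $\Br(\ddD_6)$. The non-trivial checks are the order-three braid relation between $r_2$ and $r_3$ and, above all, the icosahedral braid relation $r_1 r_2 r_1 r_2 r_1 = r_2 r_1 r_2 r_1 r_2$ along with its Brauer-algebra analogues mixing $e$'s and $r$'s across the $\ddH_3$-edge of order $5$. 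I would confine these to the parabolic subalgebra of $\Br(\ddD_6)$ generated by $R_2, R_3, R_4, R_5$ and their idempotents $E_i$, where the simply-laced defining relations from \cite{CFW2008} reduce the verification to a finite word calculation.

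For the other two stages, I would follow the template of the earlier folding papers \cite{CLY2010, CL2012, L2013}: identify a set of normal forms $w \cdot e_I \cdot w'$ in $\Br(\ddH_3)$, where $w, w'$ range over canonical coset representatives in the Coxeter group $W(\ddH_3)$ and $e_I$ ranges over products of the $e_j$'s indexed by certain admissible subsets $I$ of the node set of $\ddH_3$. A purely combinatorial enumeration of such normal forms yields the upper bound for the rank; the goal is to show this bound equals $1045$. For the matching lower bound, I would identify each $\phi_1(w e_I w')$ with a distinct element of the Cohen-Frenk-Wales diagram basis of $\Br(\ddD_6)$; because the M\"uhlherr blocks consist of mutually commuting reflections supported on disjoint parts of the diagram, different normal forms in $\Br(\ddH_3)$ cannot collapse to the same diagram in $\Br(\ddD_6)$.

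The principal obstacle, in my view, is the combinatorial rank computation. The number $1045$ is not transparently of the form usually seen for simply-laced Brauer algebras (such as $(2n-1)!!$ for type $\ddA$), so extracting it requires a careful case analysis of the admissible $e_I$-subsets for $\ddH_3$ together with the double-coset structure of $W(\ddH_3)$ relative to the parabolic subgroups stabilising each $I$. A secondary difficulty is the icosahedral braid relation at the Brauer level: this is the first folding of genuinely non-crystallographic type, so the reasoning used for the classical foldings of type $\ddC_n$, $\ddB_n$, $\ddF_4$ does not transfer verbatim, and the order-five behaviour must be tracked explicitly through the mixed $r$--$e$ relations.
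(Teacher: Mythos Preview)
Your three-stage plan matches the paper's strategy exactly: verify that $\phi_1$ respects the defining relations (the paper defers the order-five relations to the $\ddI_2^5$ paper \cite{L20132}, as you anticipate), produce a finite spanning set via normal forms, and separate their images inside $\Br(\ddD_6)$ using the Cohen--Frenk--Wales cell/admissible-set machinery.

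There is, however, a genuine gap in the normal-form shape you propose. Forms of type $w\,e_I\,w'$ with $w,w'$ ranging over coset representatives and $e_I$ over admissible idempotents would give at most $120 + 15^2 + 5^2 = 370$, not $1045$. What is missing is a \emph{middle} group element: for the single-root idempotent $e_1$ the stabiliser of $\beta_1$ in $W(\ddH_3)$ is $N_1^3 = \langle r_1, r_3, r_5\rangle \cong W(\ddA_1)^3$, and after killing $r_1$ (which is absorbed by $e_1$) one is left with a group $C_1^3 = \langle r_3, r_5\rangle$ of order $4$ that commutes with $e_1$ but acts nontrivially. The correct normal forms in this stratum are $u\,e_1\,v\,w$ with $u\in D_1^3$, $v\in C_1^3$, $w^{-1}\in D_1^3$, contributing $15\cdot 4\cdot 15 = 900$. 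The top stratum $u\,e_1e_3\,w$ has trivial middle group, giving $5^2 = 25$, and together with $|W(\ddH_3)|=120$ one obtains $120+900+25=1045$. The paper isolates exactly this structure in Lemmas~\ref{N13N14}, \ref{grpelements} and Theorem~\ref{rewritingforms}.

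For injectivity the paper does not match normal forms to individual diagram-basis elements of $\Br(\ddD_6)$ but rather shows that the three strata land in three distinct cells of $\Br(\ddD_6)$, indexed by the admissible sets $\emptyset$, $\{\alpha_2,\alpha_4\}$, and the six-element set $B_2^3$; within the middle cell one must further check that $\phi_1(C_1^3)=\langle R_1R_6, R_4^*R_6^*\rangle$ embeds into the commutator group $W(C_{WB_1^3})$. So the separating argument uses the orthogonal-mate combinatorics of \cite{CGW2009} rather than a direct bijection with diagrams, and you will need that extra ingredient.
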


\begin{thm}\label{mainthm2} There exists an injective  $\Z[\delta^{\pm 1}]$-algebra homeomorphism
   $$\phi_2:\Br(\ddH_4)\longrightarrow \Br(\ddE_8)$$
   determined by $\phi_2(r_1)=R_2R_5$, $\phi_2(r_2)=R_4R_6$, $\phi_2(r_3)=R_3R_7$, $\phi_2(r_4)=R_1R_8$, $\phi_2(e_1)=E_2E_5$,
   $\phi_2(e_2)=E_4E_6$, $\phi_2(e_3)=E_3E_7$ and  $\phi_2(e_4)=E_1E_8$. Furthermore $\Br(\ddH_4)$ are free of rank $236025$ over
   $\Z[\delta^{\pm 1}]$.
 \end{thm}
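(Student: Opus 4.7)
The plan is to follow the strategy previously used for types $\ddC_n$, $\ddB_n$ and $\ddF_4$ in \cite{CLY2010}, \cite{CL2012}, \cite{L2013}: first, verify that the assignment $\phi_2$ on generators extends to a well-defined $\Z[\delta^{\pm 1}]$-algebra homomorphism by checking that the images satisfy every defining relation of $\Br(\ddH_4)$; second, produce an explicit spanning set of $\Br(\ddH_4)$ over $\Z[\delta^{\pm 1}]$ of cardinality at most $236025$; third, show that the images of this spanning set under $\phi_2$ are $\Z[\delta^{\pm 1}]$-linearly independent inside $\Br(\ddE_8)$. The three steps together yield injectivity, freeness, and the stated rank simultaneously.

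For the first step, the defining relations of $\Br(\ddH_4)$ split into the Coxeter braid relations on the $r_i$, the Brauer-type relations on the $e_i$ (involving the parameter $\delta$), and the mixed compatibility relations between an $r_i$ and an $e_j$. Because the four pairs $\{R_2,R_5\}$, $\{R_4,R_6\}$, $\{R_3,R_7\}$, $\{R_1,R_8\}$ are precisely the orbits of M\"uhlherr's admissible partition on the $\ddE_8$ diagram and each consists of non-adjacent nodes, every $\phi_2(r_i)$ is a product of two commuting involutions, and the same holds for $\phi_2(e_i)$. Nearly all of the relations then reduce to pairs of known identities inside $\Br(\ddE_8)$. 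The delicate point is the $m=5$ braid relation $(\phi_2(r_1)\phi_2(r_2))^5=1$, which reduces to the identity $(R_2 R_5 R_4 R_6)^5 = 1$ in $\W(\ddE_8)$ together with its analogue for mixed $r$--$e$ monomials in the Brauer monoid.

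For the second step I would build a normal form on $\Br(\ddH_4)$ analogous to the one used for $\ddF_4$ in \cite{L2013}: every element is rewritten as a product of a reduced word in the $r_i$, a monomial in the $e_i$ corresponding to an admissible $\ddH_4$-configuration of mutually orthogonal positive roots, and a second reduced word in the $r_i$, normalised so that repeated applications of $e_i^2=\delta e_i$ and the braid relations cannot shorten the expression further. The count $236025$ should then arise as the sum, over $\W(\ddH_4)$-orbits of admissible configurations, of the product of the orbit size with the order of an appropriate quotient of the stabiliser, plus the contribution $|\W(\ddH_4)|=14400$ from the empty configuration; organising the enumeration in this way gives the upper bound $\leq 236025$.

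The third step, and the main obstacle, is a bijective matching: each admissible $\ddH_4$-configuration must be carried by $\phi_2$ to a distinct element of the canonical Brauer-diagram basis of $\Br(\ddE_8)$ established in \cite{CFW2008}, so that linear independence in $\Br(\ddE_8)$ descends to $\Br(\ddH_4)$. Concretely, one has to check that M\"uhlherr's folding induces an injection from $\W(\ddH_4)$-orbits of admissible $\ddH_4$-configurations into the $\W(\ddE_8)$-orbits of admissible $\ddE_8$-configurations fixed pointwise by the diagram automorphism, and that the corresponding Brauer diagrams remain pairwise distinct after the normal-form reductions. The combinatorics here is substantially larger than in the $\ddH_3$ case of Theorem~\ref{mainthm1}, so a careful orbit-by-orbit bookkeeping, likely supported by a computer verification using the root-system data of $\ddE_8$, will be required; once the bijection is in place, the rank equality $236025$ and the injectivity of $\phi_2$ follow at once.
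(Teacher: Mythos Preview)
Your three-step plan coincides with the paper's own strategy, and steps two and three are close in spirit to what the paper does via the normal forms of Theorem~\ref{rewritingforms} and the cell-by-cell injectivity argument using the results of \cite{CW2011}. However, step one contains a genuine gap.

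You identify the order-$5$ braid relation $(r_1r_2)^5=1$ and its $r$--$e$ analogues as the delicate point. In fact these all live in the subalgebra generated by $r_1,r_2,e_1,e_2$, which is $\Br(\ddI_2^5)$ and is already handled by \cite{L20132}. What you have overlooked is the additional relation~(\ref{0.1.17}),
\[
e_1(r_2r_1r_2r_1r_3r_2r_1r_2r_3r_1r_2r_1r_2r_3r_4)^5=e_1,
\]
which is present only for $\ddH_4$ and involves all four generators. Writing $r_5$ for the long reflection in the parenthesis so that the relation reads $e_1(r_5r_3r_4)^5=e_1$, one must show that $\phi_2(e_1)(\phi_2(r_5)\phi_2(r_3)\phi_2(r_4))^5=\phi_2(e_1)$ inside $\Br(\ddE_8)$. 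This does not reduce to a Coxeter identity in $W(\ddE_8)$: the element $(\phi_2(r_5)\phi_2(r_3)\phi_2(r_4))^5$ is nontrivial there, and it is only after multiplying by $E_2E_5$ that the identity holds. The paper's argument passes to the group $W(M_{\{2,5\}})\cong W(\ddA_5)$ attached to the admissible set $\{\alpha_2,\alpha_5\}$ in \cite{CW2011}, identifies explicit generators $R_1',\ldots,R_5'$ of this copy of $W(\ddA_5)$, computes $\phi_2(e_1r_5)=\delta^2 R_1'R_3'$, $\phi_2(e_1r_3)=\delta^2 R_1'R_5'$, $\phi_2(e_1r_4)=\delta^2 R_2'R_4'$, and then observes that $(R_3'R_5'R_2'R_4')^5=1$ in $W(\ddA_5)$ by a second application of M\"uhlherr's folding, this time $\ddI_2^5\hookrightarrow\ddA_4$. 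Without this step $\phi_2$ is not known to be well defined, and the rest of the argument cannot begin.

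Two minor points: in this paper $e_i^2=\delta^2 e_i$, not $\delta e_i$; and the precise count $236025=14400+60^2\cdot 60+75^2$ comes from the three normal-form types $r$, $ue_1vw$, $ue_1e_3w$ with $u,w^{-1}$ ranging over coset representatives of sizes $60$ and $75$ and $v$ over the group $C_1^4$ of order $60$, rather than from a general orbit--stabiliser sum as you describe.
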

\section{Definitions}
Let $\delta$ be a generator of a infinite cyclic group and  $\Z[\delta^{\pm 1}]$ be the group algebra over $\Z$ for the infinite cyclic groups.
\begin{defn}\label{0.1} For $k=3$, $4$,
the Brauer algebra of type $\ddH_k$, denoted by $\Br(\ddH_k)$,
is a unital associative $\Z[\delta^{\pm 1}]$-algebra generated by $\{r_i,\, e_i\}_{i=1}^{k}$,  subject to
the following relations.
\begin{eqnarray}
\delta\delta^{-1}&=&1\label{0.1.1}
\\
\delta x&=&x\delta  \ \mbox{for each generator  } x \label{0.1.2}
\\
r_{i}^{2}&=&1 \qquad \qquad\,\,\,\kern.02em \mbox{for}\,\mbox{any} \ i   \label{0.1.3}
\\
r_ie_i &= & e_ir_i \,=\, e_i \,\,\,\,\,\,\kern.05em \mbox{for}\,\mbox{any}\ i  \label{0.1.4}
\\
e_{i}^{2}&=&\delta^2 e_{i} \qquad \quad\,\,\kern.02em \mbox{for}\,\mbox{any}\ i     \label{0.1.5}
\\
r_ir_j&=&r_jr_i, \qquad \quad \mbox{for}\ i\nsim j   \label{0.1.6}
\\
e_ir_j&=&r_je_i, \qquad  \quad \kern-.03em \mbox{for}\ i\nsim j     \label{0.1.7}
\\
e_ie_j&=&e_je_i, \qquad \quad \kern-.06em \mbox{for}\ i\nsim j      \label{0.1.8}
\\
r_ir_jr_i&=&r_jr_ir_j, \qquad\,\kern-.04em \mbox{for}\ {i\sim j}\,   \label{0.1.9}
\\
r_jr_ie_j&=&e_ie_j , \quad \qquad \kern-.11em \mbox{for}\ i\sim j\,               \label{0.1.10}
\\
r_ie_jr_i&=&r_je_ir_j, \quad \quad \kern.06em \mbox{for}\ i\sim j\,           \label{0.1.11}
\\
r_1r_2r_1r_2 r_1&=&r_2r_1r_2r_1r_1,                                      \label{0.1.12}
 \\
r_1r_2e_1r_2 r_1&=&r_2r_1e_2r_1r_1,                                \label{0.1.13}
\\
  e_1e_2e_1&=&e_1,                                                        \label{0.1.14}
\\
e_1r_2e_1&=&e_1,                                                        \label{0.1.15}
\\
e_1r_2r_1r_2 e_1&=& e_1,                                      \label{0.1.16}
\end{eqnarray}
and one additional relation for $k=4$
\begin{eqnarray}
e_1(r_2r_1r_2r_1r_3r_2r_1r_2r_3r_1r_2r_1r_2r_3r_4)^5= e_1.   \label{0.1.17}
\end{eqnarray}
Here $i\sim j$ means that $i$ and $j$ are connected by a simple bond and $i\nsim j$ means that
there is no bond (simple or multiple) between $i$ and $j$ in the Coxeter Diagram of type $\ddH_k$ depicted in Figure 1.
 The submonoid
of the multiplicative monoid of $\Br(\ddH_k)$
generated by $\delta$, $\{r_i,\,e_i\}_{i=1}^{k}$   is
denoted by $\BrM(\ddH_k)$. This is the monoid of monomials in
$\Br(\ddH_k)$.
\end{defn}
If we just focus on  relations between $\{r_1, r_2, e_1, e_2\}$, this will give the algebra of $\Br(\ddI_{2}^5)$ in \cite{L20132},
which is also  isomorphic to  the algebra $\ddB_{\ddG_5}(\gamma)$ in \cite{ZhiChen} up to some parameters.  \\

We can recall from \cite{CFW2008} the definition of Brauer algebra  $\Br(Q)$ of simply laced types of   a graph $Q$
 defined as an associative algebra over $\Z[\delta^{\pm 1}]$ with a Coxeter group generator $R_i$ and a
 Temperley-Lieb generator $E_i$ associated to each vertex $i$ of $Q$,   subject to the  relation (\ref{0.1.1})-(\ref{0.1.11}) by
  replacing the $\delta^2$ in (\ref{0.1.5}) with $\delta$. naturally the monoid generated by $\delta$, $R_i$ and $E_i$ is called the Brauer monoid of
 type $Q$, denoted by $BrM(Q)$.  For each $Q$, the algebra $Br(Q)$ is free over $\Z[\delta^{\pm 1}]$. The classical
 Brauer algebra on $m+1$ strands arises when $Q=\ddA_m$.
 \section{admissible root sets}
Let $\{\beta_i\}_{i=1}^{4}$ be   simple  roots of $W(\ddH_4)$ corresponding to the notation of Figure \ref{H3H4} ($\{\beta_i\}_{i=1}^{3}$ for $W(\ddH_3)$), and $\{r_i\}_{i=1}^{4}$ be the reflections corresponding to those simple roots.
They can be embedded into Euclidean space $\R^4$ with each of them having Euclidean length $\sqrt{2}$ and
$(\beta_1,\beta_2)=\frac{-\sqrt{5}-1}{2}=-\varphi$ with  $\varphi^2=\varphi+1$.   Let $\Psi_4$ and $\Psi_3$ denote the root systems of
$W(\ddH_4)$ and $W(\ddH_3)$ respectively and $\Psi_4^+$ and $\Psi_3^+$ the positive roots respecting $\{\beta_i\}_{i=1}^{4}$.
It is known $\#\Psi_3^+=15$ and $\#\Psi_4^+=60$. A mutually orthogonal subset $B\in \Psi_4^+$ ($\Psi_3^+$) is called  an orthogonal basis
if $B$ can span $\R_4$  ($\R_3$). And it is known that
any  mutually orthogonal subsets of the same cardinality are on the same orbit under $W(\ddH_4)$ ($W(\ddH_3)$) by GAP(\cite{GAP}).
Here we just consider that the natural Coxeter group action $W(\ddH_4)$ ($W(\ddH_3)$) on $\Psi_4$ ($\Psi_3$) are restricted to
the positive roots by negating the negative ones.
Let  $$\beta_5=\varphi^2\beta_1+2\varphi \beta_2+\varphi\beta_3=r_2r_1r_2r_1r_3r_2\beta_1\in \Psi_3,$$
and $r_5$ is the corresponding reflection of $\beta_5$ in $W(\ddH_3)$ and  $W(\ddH_4)$.
Let $N_1^3$ and $N_1^3$ be the stabilizers of $\beta_1$ in $W(\ddH_3)$ and  $W(\ddH_4)$ respectively.
\begin{lemma}\label{N13N14} We have that
\begin{eqnarray*}
N_1^3&=&\left<r_1, r_3,r_5\right>\cong W(\ddA_1)^3\\
N_1^4&=&\left<r_1, r_3, r_4, r_5\right>\cong W(\ddA_1)\times W(\ddH_3).
\end{eqnarray*}
 \end{lemma}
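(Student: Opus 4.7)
The plan is to verify both claims by direct computation of inner products among $\beta_1,\beta_3,\beta_4,\beta_5$, combined with an orbit--stabilizer count that pins down the orders of $N_1^3$ and $N_1^4$. From $\beta_5=\varphi^2\beta_1+2\varphi\beta_2+\varphi\beta_3$, the simple-root data $(\beta_1,\beta_2)=-\varphi$, $(\beta_2,\beta_3)=-1$, $(\beta_3,\beta_4)=-1$, $(\beta_i,\beta_j)=0$ for non-adjacent $i,j$, and the relation $\varphi^2=\varphi+1$, I obtain $(\beta_5,\beta_1)=2\varphi^2-2\varphi^2=0$, $(\beta_5,\beta_3)=-2\varphi+2\varphi=0$, and $(\beta_5,\beta_4)=-\varphi$.

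For the $\ddH_3$ statement, these identities say that $\beta_1,\beta_3,\beta_5$ form an orthogonal basis of $\R^3$. The reflections $r_1,r_3,r_5$ therefore pairwise commute, each has order $2$, and evaluating $r_1^{a_1}r_3^{a_3}r_5^{a_5}$ separately on $\beta_1,\beta_3,\beta_5$ shows the natural surjection $(\Z/2)^3\twoheadrightarrow\langle r_1,r_3,r_5\rangle$ has trivial kernel, giving $\langle r_1,r_3,r_5\rangle\cong W(\ddA_1)^3$ of order $8$. To see this subgroup fills $N_1^3$, I would identify $N_1^3$ with the centralizer $C_{W(\ddH_3)}(r_1)$ (using $wr_1w^{-1}=r_{w\beta_1}$ and the equivalence $w\cdot\beta_1=\beta_1\iff w\beta_1=\pm\beta_1$ for the positive-root action); since every edge of the $\ddH_3$ diagram has odd label, all $15$ reflections of $W(\ddH_3)$ are conjugate, so orbit--stabilizer yields $|N_1^3|=120/15=8$, matching the subgroup.

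For the $\ddH_4$ statement, the same computation shows $\beta_3,\beta_4,\beta_5$ are linearly independent roots orthogonal to $\beta_1$ whose Gram matrix matches that of the simple roots of $\ddH_3$ (edge $\beta_3\beta_4$ labelled $3$, edge $\beta_4\beta_5$ labelled $5$, and $\beta_3\perp\beta_5$). By the standard fact that reflections in linearly independent vectors with a positive-definite Coxeter-type Gram matrix generate a faithful copy of the corresponding finite Coxeter group, $\langle r_3,r_4,r_5\rangle\cong W(\ddH_3)$ of order $120$. Since $\beta_1$ is orthogonal to each of $\beta_3,\beta_4,\beta_5$, $r_1$ commutes with all three generators, and $r_1\notin\langle r_3,r_4,r_5\rangle$ because the latter fixes $\beta_1$ pointwise while $r_1$ negates it; hence $\langle r_1,r_3,r_4,r_5\rangle$ is the internal direct product $W(\ddA_1)\times W(\ddH_3)$, of order $240$. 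Every edge of the $\ddH_4$ diagram also has odd label, so the $60$ reflections of $W(\ddH_4)$ form a single conjugacy class and orbit--stabilizer gives $|N_1^4|=14400/60=240$, completing the proof. The only non-routine point is identifying $\langle r_3,r_4,r_5\rangle$ with the abstract $W(\ddH_3)$, which relies on the linear independence of $\beta_3,\beta_4,\beta_5$ to keep the geometric representation of that subgroup faithful.
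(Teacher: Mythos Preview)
Your proof is correct and follows essentially the same route as the paper: compute the inner products $(\beta_1,\beta_5)=(\beta_3,\beta_5)=0$ and $(\beta_4,\beta_5)=-\varphi$ to read off the Coxeter diagram on $\{r_1,r_3,r_4,r_5\}$, then match orders via an orbit--stabilizer count ($120/15=8$ and $14400/60=240$). The only cosmetic difference is that you phrase the count through the centralizer of $r_1$ and conjugacy of reflections, whereas the paper speaks directly of the orbit of $\beta_1$ in $\Psi_k^+$; these are equivalent, and your added justification that $\langle r_3,r_4,r_5\rangle$ is a faithful copy of $W(\ddH_3)$ (via linear independence and the geometric representation) makes explicit a step the paper leaves implicit.
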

\begin{proof}
We have the following results of inner products involving $\beta_5$,\\
 $(\beta_1,\beta_5)=0=(\beta_3, \beta_5)$,  $(\beta_4,\beta_5)=-\varphi$. \\
 Hence we have the diagram relation for them as below which indicates that our claim about the
 group isomorphisms in this lemma holds. Using this observation, we can compute indices of the two subgroups $\left<r_1, r_3,r_5\right>$ and  $\left<r_1, r_3, r_4, r_5\right>$
 in  $W(\ddH_3)$ and $W(\ddH_4)$ and show that these are $15=\#\Psi_3^+$ and $60=\#\Psi_4^+$, respectively.
 $$\circ_{1}\quad \circ_{3}\rule{10mm}{.2mm}\circ_{4}\overset{5}{\rule{10mm}{.2mm}}\circ_5$$
By the diagram, we find that the two (resp. three) reflections  stabilize $\beta_1$ in  $W(\ddH_3)$
(resp. $W(\ddH_4)$), respectively.
Therefore the lemma follows from  Lagrange's Theorem.

 \end{proof}
 It can be verified that that the subgroup generated by  $(r_5r_3r_4)^5$ is a normal subgroup of
 $\left<r_3,r_4, r_5\right>\cong W(\ddH_3)$ and $(r_5r_3r_4)^5$ has order $2$.
 Let $C_1^3=\left< r_3, r_5\right>$  and $C_1^4$ be representatives of left coset of
 $\left<(r_5r_3r_4)^5\right>$ in $\left<r_3,r_4, r_5\right>$.
  Then
 $C_1^3\cong W(\ddA_1)^2$ and $C_1^4\cong W(\ddH_3)/\left<(r_5r_3r_4)^5\right>$. Let $D_1^3$ and $D_1^4$ be the left coset representatives
 of $N_1^3$ in $W(\ddH_3)$ and  $N_1^4$ in $W(\ddH_4)$ respectively. Then $\# D_1^3=\#\Psi_3^+=15$,
 $\# D_1^4=\#\Psi_4^+=60$.
 As in \cite{CLY2010} and \cite{CL2012}, we have the following lemma.
 \begin{lemma}\label{lem.ij} For $i$ and $j$ be nodes of the Coxeter diagram of $\ddH_3$ or $\ddH_4$. If $w\in W(\ddH_4)$ or $W(\ddH_3)$,
 satisfies $w\beta_i=\beta_j$, then $we_iw^{-1}=e_j$.
 \end{lemma}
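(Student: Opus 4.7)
The plan is to mirror the strategy of the analogous lemmas in \cite{CLY2010, CL2012}, exploiting Lemma \ref{N13N14}. The first reduction is the observation that because the Coxeter diagram of $\ddH_k$ is connected and every simple root has the same Euclidean length, the set $\Psi_k^+$ of positive roots forms a single $W(\ddH_k)$-orbit. Fixing $\beta_1$ as a base point and selecting, for each $j$, a chosen element $u_j$ with $u_j\beta_1 = \beta_j$, any $w$ with $w\beta_1 = \beta_j$ factors as $w = u_j n$ with $n \in N_1^k$. Hence, if I can prove (a) $u_j e_1 u_j^{-1} = e_j$ for each $j$, and (b) $n e_1 n^{-1} = e_1$ for every $n \in N_1^k$, then $w e_1 w^{-1} = e_j$ follows. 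The general statement with arbitrary $i$ is then obtained by further pre-composing with a fixed $v$ satisfying $v\beta_1 = \beta_i$.

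For claim (a), I would move $e_1$ to $e_j$ by walking along the Coxeter diagram. Along a simple bond $i \sim j$, relation (\ref{0.1.11}) gives $r_i e_j r_i = r_j e_i r_j$; combined with $r_i^2 = r_j^2 = 1$, this yields $(r_j r_i) e_i (r_j r_i)^{-1} = e_j$, which handles the transition $\beta_2 \to \beta_3$ and (in the $\ddH_4$ case) $\beta_3 \to \beta_4$. For the 5-bond joining nodes $1$ and $2$, I would instead use the dihedral-type relations (\ref{0.1.13}), (\ref{0.1.15}) and (\ref{0.1.16}), which are the natural $m=5$ analogues of the simply-laced relations, to exhibit an explicit element $u_2 \in \langle r_1, r_2 \rangle$ with $u_2 \beta_1 = \beta_2$ for which $u_2 e_1 u_2^{-1} = e_2$ can be verified directly within the dihedral subalgebra.

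For claim (b), Lemma \ref{N13N14} reduces the task to checking that each of the generators $r_1, r_3, r_5$ (with $r_4$ adjoined in the $\ddH_4$ case) centralizes $e_1$. The cases $r_1$, $r_3$, and $r_4$ are immediate from the defining relations: $r_1 e_1 r_1 = e_1$ follows from (\ref{0.1.4}) with $r_1^2 = 1$, while $r_3 e_1 = e_1 r_3$ and $r_4 e_1 = e_1 r_4$ come from (\ref{0.1.7}), since neither node $3$ nor node $4$ is adjacent to node $1$ in the diagram of $\ddH_k$.

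The main obstacle is the case $n = r_5$, since $r_5$ is a non-simple reflection expressible only as the conjugate $r_5 = w r_1 w^{-1}$ with $w = r_2 r_1 r_2 r_1 r_3 r_2$, coming from the definition $\beta_5 = w\beta_1$. Verifying $r_5 e_1 = e_1 r_5$ in $\BrM(\ddH_k)$ requires expanding this conjugation and repeatedly applying the full battery of relations (\ref{0.1.10})--(\ref{0.1.16}), to reduce both sides to a common form. Although the calculation is finite, this is the step where the non-crystallographic nature of $\ddH_k$ forces genuine use of the new relations (\ref{0.1.12})--(\ref{0.1.16}) beyond the simply-laced template of \cite{CFW2008}, and is the real obstacle; nevertheless it follows the same pattern as the corresponding verifications for the $\ddF_4$ case in \cite{L2013} and the $\ddB_n$ case in \cite{CL2012}.
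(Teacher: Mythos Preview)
Your proposal is correct and follows essentially the same approach as the paper: reduce to showing that the generators of $N_1^k$ from Lemma~\ref{N13N14} centralize $e_1$, dispatch $r_1,r_3,r_4$ trivially from (\ref{0.1.4}) and (\ref{0.1.7}), and handle $r_5$ by an explicit computation. The paper carries out the $r_5$ step concretely via the auxiliary identities $r_1r_2r_1r_2\,e_1\,r_2r_1r_2r_1=e_2$, $r_2r_3\,e_2\,r_3r_2=e_3$, $r_3r_2\,e_3\,r_2r_3=e_2$, and $r_2r_1r_2r_1\,e_2\,r_1r_2r_1r_2=e_1$ (together with $r_1e_3r_1=e_3$), so that $r_5e_1r_5$ telescopes back to $e_1$; these same identities simultaneously furnish the diagram-walk formulas you need for your claim~(a).
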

 \begin{proof}It suffices to prove that any generator of $N_1^4$ in Lemma \ref{N13N14}, satisfies that $re_1r^{-1}=e_1$.
 The cases of  $r_1$, $r_3$, $r_4$  hold trivially; but for $r_5$,  we just apply the following formulas which can be deduced easily from the definition,
 \begin{eqnarray*}
 r_2r_1r_2r_1e_2r_1r_2r_1r_2&=&e_1,\\
  r_1r_2r_1r_2e_1r_2r_1r_2r_1&=&e_2,\\
  r_2r_3e_2r_3r_2&=&e_3,\\
  r_3r_2e_3r_2r_3&=&e_2,
 \end{eqnarray*}
 then we have
 \begin{eqnarray*}
 r_5e_1r_5&=&r_2r_1r_2r_1r_3r_2r_1r_2r_3(r_1r_2r_1r_2e_1 r_2r_1r_2r_1)r_3r_2r_1r_2r_3r_1r_2r_1r_2\\
 &=&r_2r_1r_2r_1r_3r_2r_1(r_2r_3e_2r_3r_2)r_1r_2r_3r_1r_2r_1r_2\\
 &=&r_2r_1r_2r_1r_3r_2(r_1e_3r_1)r_2r_3r_1r_2r_1r_2\\
 &=&r_2r_1r_2r_1(r_3r_2e_3r_2r_3)r_1r_2r_1r_2\\
 &=&r_2r_1r_2r_1e_2r_1r_2r_1r_2=e_1.
 \end{eqnarray*}
 \end{proof}
 We define $e_\beta=re_{i}r^{-1}$, if $\beta=r\beta_i$. By the above lemma it is well defined.
 Since Coxeter groups $W(\ddH_3)$ and $W(\ddH_4)$ acts transitively on mutually orthogonal root  sets of the same cardinality, hence $e_{\beta}e_{\beta'}=e_{\beta'}e_{\beta}$,
if $\{\beta,\beta'\}$ is a mutually orthogonal root set,  in view of  $e_1e_3=e_3e_1$. Therefore for  any   mutually orthogonal root subset $B$  of  $\Psi_3^+$ or
$\Psi_4^+$, we can  define that
$$e_{B}=\prod_{\beta\in B}e_{\beta}.$$
\begin{lemma}\label{psi3psi4}
For $\Psi_3^+$ and $\Psi_4^+$, The following holds.
\begin{enumerate}[(I)]
\item For each $\beta\in \Psi_3^+$, there is a unique orthogonal basis subset of $\Psi_3^+$ containing $\beta$,
hence there are $5$ different orthogonal basis subsets of $\Psi_3^+$.
\item For each $\beta\in \Psi_4^+$, there is $5$ orthogonal basis subsets of $\Psi_4^+$ containing it,
hence there are $75$ different orthogonal basis subsets of $\Psi_4^+$.
\item  For any orthogonal subset $B$ of $\Psi_4^+$ with $\#B>1$, there is a unique orthogonal basis subset of $\Psi_4^+$ containing $B$.
\end{enumerate}
\end{lemma}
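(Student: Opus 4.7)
The strategy is to analyze all three parts through the root subsystems orthogonal to $\beta_1$, which Lemma \ref{N13N14} implicitly identifies. Throughout I would use the transitivity of $W(\ddH_k)$ on mutually orthogonal subsets of each fixed cardinality (asserted via GAP earlier), together with the standard fact that the stabilizer of a root $\beta$ in a finite Coxeter group decomposes as $\langle r_\beta\rangle$ times the reflection subgroup of the root subsystem $\Psi\cap\beta^\perp$.

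First I would identify the subsystems orthogonal to $\beta_1$. The inner-product data in the proof of Lemma \ref{N13N14} give $(\beta_1,\beta_3)=(\beta_1,\beta_5)=(\beta_3,\beta_5)=0$, while the displayed diagram on $\{\beta_3,\beta_4,\beta_5\}$ is of type $\ddH_3$. Combined with the decompositions $N_1^3=\langle r_1\rangle\times\langle r_3,r_5\rangle$ and $N_1^4=\langle r_1\rangle\times\langle r_3,r_4,r_5\rangle$ from Lemma \ref{N13N14}, this shows that the roots of $\Psi_3$ orthogonal to $\beta_1$ form an $\ddA_1\times\ddA_1$ subsystem with positive part $\{\beta_3,\beta_5\}$, while the roots of $\Psi_4$ orthogonal to $\beta_1$ form an $\ddH_3$ subsystem with exactly $15$ positive roots.

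For (I), by transitivity it suffices to classify orthogonal bases of $\Psi_3^+$ containing $\beta_1$. Any such basis equals $\{\beta_1\}\cup B_0$ where $B_0$ is a mutually orthogonal pair of positive roots of the $\ddA_1\times\ddA_1$ subsystem above. Since that subsystem has only the two positive roots $\beta_3,\beta_5$ and they are already orthogonal, the unique basis is $\{\beta_1,\beta_3,\beta_5\}$; counting incidences yields $15/3=5$ bases in total. For (II), the same reduction identifies orthogonal bases of $\Psi_4^+$ containing $\beta_1$ with orthogonal bases of the $\ddH_3$ subsystem perpendicular to $\beta_1$; part (I) applied to that subsystem gives exactly $5$, so every positive root of $\Psi_4^+$ lies in $5$ orthogonal bases and the total is $60\cdot 5/4=75$.

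For (III), transitivity lets me assume $\beta_1\in B$, and I write $B=\{\beta_1\}\cup B_1$ with $B_1$ a nonempty mutually orthogonal subset of the $\ddH_3$ subsystem perpendicular to $\beta_1$. Any orthogonal basis of $\Psi_4^+$ containing $B$ is then $\{\beta_1\}\cup B_0$ with $B_0$ an orthogonal basis of that $\ddH_3$ extending $B_1$, and the uniqueness of $B_0$ is immediate from part (I) applied to the $\ddH_3$ subsystem. The one real obstacle lies in the identification step at the start: I must be sure that the generators exhibited in Lemma \ref{N13N14} account for \emph{all} positive roots orthogonal to $\beta_1$ rather than merely generating the reflection part of the stabilizer. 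This is where the general Coxeter-stabilizer fact, together with the explicit inner products in the proof of Lemma \ref{N13N14}, is essential; once it is in hand, parts (I)--(III) follow by the short reductions above.
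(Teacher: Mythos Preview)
Your proposal is correct and follows essentially the same route as the paper: reduce to $\beta=\beta_1$ by transitivity, use Lemma~\ref{N13N14} to identify the positive roots orthogonal to $\beta_1$ (namely $\{\beta_3,\beta_5\}$ in $\Psi_3^+$ and an $\ddH_3$ subsystem in $\Psi_4^+$), and then count. The only difference is one of explicitness: where the paper simply infers from $r_{\beta'}\in N_1^3=\langle r_1,r_3,r_5\rangle$ that $\beta'\in\{\beta_3,\beta_5\}$, you spell out the underlying Coxeter-theoretic fact that the stabilizer of a root splits as $\langle r_\beta\rangle$ times the reflection group of $\Psi\cap\beta^\perp$, which is exactly what justifies that implication.
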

\begin{proof} Any $\beta$ can be obtained from $\beta_1$ by acting some element in $W(\ddH_3)$, we can just consider
$\beta=\beta_1$.
If $\beta_1\in B\subset\Psi_3^+$ is an orthogonal basis, then there exist another two positive roots $\beta'$, $\beta''$ in $B$ being
orthogonal to $\beta_1$, which implies that the corresponding reflection can fix $\beta_1$, by the lemma \ref{N13N14}, we see that
$B$ must be $\{\beta_1, \beta_3, \beta_5\}$. Hence the number of orthogonal basis subsets of $\Psi_3^+$ is $\frac{15}{3}=5$.\\
The first conclusion of (II) follows from (I) and Lemma \ref{N13N14}. The second one holds as $\frac{60\times 5}{4}=75.$\\
The (III) follows from (II) and (I).
\end{proof}
\begin{defn} If a mutually orthogonal subset $B$ in $\Psi_3^+$  ($\Psi_4^+$) has at most one element or is an orthogonal basis,   then we call
$B$  an \emph{admissible root set} of type $\ddH_3$  ($\ddH_4$) or $B$ \emph{admissible}.
\end{defn}
\begin{rem} By Lemma \ref{N13N14} and Lemma \ref{psi3psi4}, there exists a unique positive root  for $C_1^4\cong W(\ddH_3)$  with  $\beta_3$,  $\beta_4$, and $\beta_5$ being the simple roots which are orthogonal to $\beta_3$, $\beta_5$, and we denote it by $\beta_7$.
Hence the $\{\beta_1,\beta_3,\beta_5,\beta_7\}$ is an orthogonal basis and admissible root set of type $\ddH_4$.
\end{rem}
By Lemma \ref{psi3psi4}, for each mutually orthogonal subset $B$ of $\Psi_3^+$ $(\Psi_4^+)$, there exists a minimal unique admissible root set
containing $B$, we denote it by $B^{\rm cl}$, and call it the \emph{admissible closure} of $B$.
As in \cite{CL2012},  the lemma below holds.
\begin{lemma}\label{cldelta}For each each mutually orthogonal subset $B$ of $\Psi_3^+$ $(\Psi_4^+)$, we have
$$e_{B^{\rm cl}}=\delta^{2\#(B^{\rm cl}\setminus B)}e_{B}.$$
\end{lemma}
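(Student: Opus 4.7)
The plan is to reduce the proof to a single key identity: for every mutually orthogonal subset $B\subseteq\Psi_3^+$ (resp.\ $\Psi_4^+$) with $\#B\ge 2$ and every $\gamma\in B^{\rm cl}\setminus B$,
$$e_B\,e_\gamma \;=\; \delta^2 e_B. \qquad(\ast)$$
Granting $(\ast)$, the lemma follows by iteration. Write $B^{\rm cl}\setminus B=\{\gamma_1,\dots,\gamma_k\}$ with $k=\#(B^{\rm cl}\setminus B)$; since $B^{\rm cl}$ is a mutually orthogonal set, each $e_{\gamma_j}$ commutes with $e_B$, so $e_{B^{\rm cl}}=e_B\,e_{\gamma_1}\cdots e_{\gamma_k}$. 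Applying $(\ast)$ to the pairs $(B,\gamma_1),\ldots,(B,\gamma_k)$ in turn produces a factor of $\delta^2$ at each step, yielding $e_{B^{\rm cl}}=\delta^{2k}e_B$, which is the desired identity. (Note that the trivial cases $\#B\le 1$ and $B$ a basis already have $B^{\rm cl}=B$, so $k=0$ there.)

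To verify $(\ast)$ I use the $W(\ddH_k)$-transitivity on mutually orthogonal subsets of fixed cardinality (Lemma~\ref{psi3psi4}) together with the conjugation rule $w\,e_\beta\,w^{-1}=e_{w\beta}$ (Lemma~\ref{lem.ij}) to reduce to a small number of standard configurations $(B,\gamma)$ drawn from the reference bases $\{\beta_1,\beta_3,\beta_5\}$ for $\ddH_3$ and $\{\beta_1,\beta_3,\beta_5,\beta_7\}$ for $\ddH_4$. For $\ddH_3$ there is essentially one nontrivial configuration, namely $B=\{\beta_1,\beta_3\}$ with $\gamma=\beta_5$, so $(\ast)$ reads $e_1 e_3 e_5=\delta^2 e_1 e_3$. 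Substituting $e_5 = w\,e_1\,w^{-1}$ with $w=r_2r_1r_2r_1r_3r_2$ and applying the local relations, in particular the $5$-bond identities (\ref{0.1.14})--(\ref{0.1.16}) and the adjacent-bond identities (\ref{0.1.10})--(\ref{0.1.11}), the long word collapses via a subword $e_i^2=\delta^2 e_i$ which supplies the scalar. The $\ddH_4$ case proceeds analogously, splitting into the configurations with $\#B=3$ and $\#B=2$; here the global relation (\ref{0.1.17}) is invoked to collapse the longer words that arise once $\gamma$ is taken to involve $r_4$.

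The main obstacle is the explicit word manipulation in the standard configurations: the presence of the $5$-bond forces long and careful computations with the non-simply-laced relations (\ref{0.1.12})--(\ref{0.1.16}), and for $\ddH_4$ the cycle identity (\ref{0.1.17}) of length $30$ must be inserted at the right moment. The structural reason the calculations close is uniform: each $e_\gamma$ with $\gamma\in B^{\rm cl}\setminus B$ can, after conjugation by an element of the stabilizer of $e_B$, be brought adjacent to one of the $e_i$ occurring in $e_B$ so as to produce a factor $e_i^2$, which contributes the required $\delta^2$. This is precisely the strategy already used in \cite{CLY2010} and \cite{CL2012}, adapted here to the $\ddH$-type relations.
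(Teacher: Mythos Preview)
Your reduction to the key identity $(\ast)$ and the iteration argument are correct, and this is exactly the strategy the paper has in mind: the paper gives no proof of its own but simply writes ``As in \cite{CL2012}, the lemma below holds,'' deferring to the same conjugate-to-a-standard-configuration-then-compute approach that you outline. So at the structural level your proposal and the paper's (implicit) argument coincide.

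One point deserves correction. You assert that for $\ddH_4$ ``the global relation (\ref{0.1.17}) is invoked to collapse the longer words that arise once $\gamma$ is taken to involve $r_4$.'' This is not so: the closure identity already holds from relations (\ref{0.1.1})--(\ref{0.1.16}) alone. Concretely, after standardising to $B=\{\beta_1,\beta_3\}$ the case $\gamma=\beta_5$ is the $\ddH_3$ computation (no $r_4$ at all), and the case $\gamma=\beta_7$ reduces to it by conjugation: the setwise stabiliser $N_2^4$ of the basis $\{\beta_1,\beta_3,\beta_5,\beta_7\}$ surjects onto $A_4\subset S_4$, and the double transposition $(\beta_1\,\beta_3)(\beta_5\,\beta_7)\in A_4$ provides a $w\in W(\ddH_4)$ with $w\{\beta_1,\beta_3\}=\{\beta_1,\beta_3\}$ and $w\beta_5=\beta_7$, whence $e_1e_3e_{\beta_7}=w(e_1e_3e_{\beta_5})w^{-1}=\delta^2 e_1e_3$. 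Only Coxeter relations for $w$ and Lemma~\ref{lem.ij} are used here, never (\ref{0.1.17}). This is consistent with the paper's final Remark, which observes that dropping (\ref{0.1.17}) yields the larger algebra $\Br_{\ddG_{\ddH_4}}(\gamma)$ whose normal-form count still relies on the same closure lemma.

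Finally, your sketch of the base computation $e_1e_3e_{\beta_5}=\delta^2 e_1e_3$ (``the long word collapses via a subword $e_i^2=\delta^2 e_i$'') is not a proof as written; you will need to actually exhibit the sequence of applications of (\ref{0.1.10})--(\ref{0.1.11}) and (\ref{0.1.14})--(\ref{0.1.16}) that produces the $\delta^2$. The paper does not do this either, but it at least points to \cite{CL2012} where the analogous computation is carried out.
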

Let $N_2^3$ and $N_2^4$ be stabilizers of $\{\beta_1, \beta_3, \beta_5\}$ in $W(\ddH_3)$ and
$\{\beta_1, \beta_3, \beta_5\, \beta_7\}$ in $W(\ddH_4)$ respectively. Let $D_2^3$ and $D_2^4$ be the left coset representatives
 of $N_2^3$ in $W(\ddH_3)$ and  $N_2^4$ in $W(\ddH_4)$ respectively. Then $\# D_2^3=5$,
 $\# D_2^4=75$ by Lemma \ref{psi3psi4}.
 \section{Normal forms of $\BrM(\ddH_3)$ and $\BrM(\ddH_4)$}
 As in \cite{CLY2010}, the following conclusion can hold by easy verification.
 \begin{lemma} \label{grpelements}
 In $\BrM(\ddH_3)$ $(\BrM(\ddH_4))$, the following holds.
 \begin{enumerate}[(i)]
 \item Each element in $C_1^3$ $(C_1^4)$  commutes with $e_1$.
 \item For each element $r\in W(\ddH_3)$ $(W(\ddH_4))$, there exist  $r'\in D_1^3$ $(D_1^4)$ and $r''\in C_1^3$ $(C_1^4)$, such that
 $$r e_1= r' e_1 r''. $$
 \item For each element $r\in W(\ddH_3)$ $(W(\ddH_4))$, there exists an element $r'\in D_2^3$ $(D_2^4)$,  such that
 $$r e_1e_3= r'e_1e_3. $$
 \end{enumerate}
 \end{lemma}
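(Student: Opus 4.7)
The plan is to treat the three parts in sequence, each building on the previous together with the defining relations and the structural lemmas already in hand.

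Part (i) is immediate from Lemma~\ref{lem.ij}: every reflection generator of $C_1^3$ and $C_1^4$ stabilizes $\beta_1$ (as recorded by the diagram inside the proof of Lemma~\ref{N13N14}), hence commutes with $e_1$, and the conclusion extends to arbitrary elements by taking products.

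For part (ii) I would apply the coset factorization $W(\ddH_k)=D_1^k\cdot N_1^k$, write $r=r_0 n$ with $n\in N_1^k$, and push $n$ through $e_1$. In the $\ddH_3$ case $N_1^3=\langle r_1\rangle\times C_1^3$, so $n=r_1^a c$ with $c\in C_1^3$; relation~(\ref{0.1.4}) and (i) give $n e_1=r_1^a e_1 c=e_1 c$. For $\ddH_4$ the subgroup $\langle r_3,r_4,r_5\rangle\cong W(\ddH_3)$ further splits as $C_1^4\cdot\langle z\rangle$ with $z=(r_5 r_3 r_4)^5$; the key extra input is that the long word inside relation~(\ref{0.1.17}) is exactly $z$, as one sees by substituting $r_5=u r_1 u^{-1}$ for $u=r_2 r_1 r_2 r_1 r_3 r_2$. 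Thus~(\ref{0.1.17}) reads $e_1 z=e_1$, and combined with $z e_1 z^{-1}=e_1$ from Lemma~\ref{lem.ij} we conclude $z e_1=e_1$; the $z$-factor and the $r_1$-factor are absorbed, and the remaining $C_1^4$-factor commutes through by (i).

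For part (iii) I would again write $r=r_0 n$ with $r_0\in D_2^k$ and $n\in N_2^k$ and reduce to showing $n e_1 e_3=e_1 e_3$. The central ingredient is Lemma~\ref{cldelta} combined with Lemma~\ref{psi3psi4}: since the admissible closure of any two-element orthogonal subset of the orthogonal basis $B$ containing $\{\beta_1,\beta_3\}$ is $B$ itself, the product $e_i e_j$ is independent of the chosen distinct pair in $B$ and equals $e_1 e_3$. Applying Lemma~\ref{lem.ij} to the permutation of $B$ that $n$ induces then gives $n e_1 e_3 n^{-1}=e_{n\beta_1}e_{n\beta_3}=e_1 e_3$, so $n$ commutes with $e_1 e_3$. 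Upgrading commutativity to absorption is done generator by generator: the reflections fixing $B$ pointwise (namely $r_1,r_3,r_5$, and $r_7$ in the $\ddH_4$ case) absorb using~(\ref{0.1.4}), $r_3 e_3=e_3$, and the identity $e_1 e_3 r_5=\delta^{-2}e_1 e_3 e_5 r_5=e_1 e_3$ obtained from Lemma~\ref{cldelta} and $e_5 r_5=e_5$; the remaining generators of $N_2^k$ that permute $B$ non-trivially require a direct case check expressed in the simple Coxeter generators, parallel to the analogous step in \cite{CLY2010} and \cite{CL2012}.

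The main obstacle I anticipate is precisely this last check, since the commutativity $n e_1 e_3=e_1 e_3 n$ does not by itself force $e_1 e_3 n=e_1 e_3$. Fortunately the quotient of $N_2^k$ by the pointwise stabilizer of $B$ is small (of order $3$ for $\ddH_3$ and isomorphic to $A_4$ for $\ddH_4$), so only finitely many coset representatives need to be analyzed to complete the argument.
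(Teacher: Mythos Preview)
Your proposal is correct and follows the same strategy the paper has in mind: the paper's own proof is a single sentence deferring to the analogous verification in \cite{CLY2010}, and you have unpacked precisely that argument (coset factorization $W=D\cdot N$, commutation from Lemma~\ref{lem.ij}, absorption of the extra factors via the defining relations, and a residual finite case check for $N_2^k$).  Your identification of the word in relation~(\ref{0.1.17}) with $(r_5r_3r_4)^5$ is exactly right and is in fact the same rewriting the paper uses later, in the proof of Theorem~\ref{rewritingforms}, to verify that $\cdot^{\op}$ preserves~(\ref{0.1.17}).
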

 Next we consider the cases for Temperley-Lieb elements(\cite{TL1971}).
 \begin{lemma}\label{ebetae1} Let $\beta\in \Psi_3^+$ $(\Psi_4^+)$, such that $\beta$ is not equal to or orthogonal to $\beta_1$. Then there exists some
   $r'\in D_1^3$ $(D_1^4)$ and $r''\in C_1^3$ $(C_1^4)$, such that
$$e_{\beta}e_1=r' e_1 r''.$$
 \end{lemma}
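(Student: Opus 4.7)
The plan is to reduce the statement to a small list of base cases via the action of the stabilizer $N_1^k$, in the same spirit as the analogous lemmas in \cite{CLY2010,CL2012}. Let $S := \{\gamma \in \Psi_k^+ : \gamma \ne \beta_1,\ (\gamma,\beta_1) \ne 0\}$, which is invariant under $N_1^k$. Assume that for a chosen set of $N_1^k$-orbit representatives $\beta_0 \in S$ I have already shown $e_{\beta_0}e_1 = r'_0 e_1 r''_0$ with $r'_0 \in D_1^k$, $r''_0 \in C_1^k$. For a general $\beta = u\beta_0$ with $u \in N_1^k$, Lemma \ref{lem.ij} yields $ue_1 = e_1 u$ (since $u$ fixes $\beta_1$), so
\begin{equation*}
e_\beta e_1 \,=\, u e_{\beta_0} u^{-1} e_1 \,=\, u e_{\beta_0} e_1 u^{-1} \,=\, u r'_0 e_1 r''_0 u^{-1}.
\end{equation*}
Using the direct-product decomposition $N_1^k = \langle r_1\rangle \cdot C_1^k$ coming from Lemma \ref{N13N14}, I would write $r''_0 u^{-1} = r_1^\epsilon c$ with $c \in C_1^k$ and collapse $e_1 r_1^\epsilon = e_1$ via \eqref{0.1.4}, obtaining $e_\beta e_1 = (ur'_0) e_1 c$. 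A final application of Lemma \ref{grpelements}(ii) to $ur'_0$ rewrites this as $r' e_1 \tilde c$ with $r' \in D_1^k$, $\tilde c \in C_1^k$, producing the desired normal form $e_\beta e_1 = r' e_1(\tilde c c)$ with $\tilde c c \in C_1^k$.

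The content then lies in verifying the base cases. For $\ddH_3$, since $r_1 r_3 r_5$ is the product of reflections in the orthogonal basis $\{\beta_1,\beta_3,\beta_5\}$, it acts as $-1$ on $\R^3$ and hence stabilizes every positive root under the negate-to-positive convention; so each $N_1^3$-orbit in $S$ has size dividing $8/2=4$, and a short check shows that $|S|=12$ decomposes into three orbits of size $4$, distinguished by $|(\beta,\beta_1)| \in \{\varphi,\varphi^{-1},1\}$. For the representative $\beta_0=\beta_2$, the identity $e_2 = r_1 r_2 r_1 r_2\,e_1\,r_2 r_1 r_2 r_1$ (already derived inside the proof of Lemma \ref{lem.ij}) combined with $e_1 r_2 r_1 r_2 r_1 e_1 = e_1 r_2 r_1 r_2 e_1 = e_1$ from \eqref{0.1.4} and \eqref{0.1.16} gives
\begin{equation*}
e_2 e_1 \,=\, r_1 r_2 r_1 r_2\,\bigl(e_1 r_2 r_1 r_2 r_1 e_1\bigr) \,=\, r_1 r_2 r_1 r_2\,e_1.
\end{equation*}
For the two remaining representatives---one in the second $I_2(5)$-dihedral through $\beta_1$ and one in an $A_2$-dihedral through $\beta_1$---I would pick explicit words $w \in W(\ddH_3)$ with $w\beta_1 = \beta_0$ and reduce $we_1w^{-1}e_1$ to the same form by analogous manipulations of \eqref{0.1.14}--\eqref{0.1.16} and Coxeter commutations.

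The main obstacle will be the $\ddH_4$ analysis. There $N_1^4 \cong W(\ddA_1)\times W(\ddH_3)$ has order $240$, and $S$ has $44$ elements distributed over several $N_1^4$-orbits distinguished both by $|(\beta,\beta_1)|$ and by the length of $\beta^\perp$ (the component of $\beta$ in $\beta_1^\perp$). For orbits whose representative word reaches outside the parabolic block $\langle r_1, r_2\rangle$, the reduction may require the extra relation \eqref{0.1.17} to close. But once these orbits are enumerated and a short word $w$ with $w\beta_1 = \beta_0$ is fixed for each, the verifications are routine applications of the defining relations together with Lemma \ref{grpelements} and Lemma \ref{lem.ij}.
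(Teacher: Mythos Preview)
Your reduction via the stabilizer $N_1^k$ is sound and matches the paper's philosophy, but the paper organizes the base cases much more efficiently than you propose. Rather than enumerating $N_1^k$-orbits on $S$ and verifying a representative of each, the paper splits into just two cases according to the value of $(\beta_1,\beta)$. If $(\beta_1,\beta)=\pm 1$, then $W(\ddH_k)$ carries the pair $\{\beta_1,\beta\}$ to $\{\beta_3,\beta_2\}$, so conjugating relation \eqref{0.1.10} gives $e_\beta e_1 = r_1 r_\beta e_1$ directly, after which Lemma~\ref{grpelements}(ii) finishes. If $(\beta_1,\beta)\neq\pm 1$, the paper observes that $\beta$ lies in the $N_1^k$-orbit of a root in the rank-two span of $\beta_1,\beta_2$, so after your conjugation step the computation lives entirely inside $\BrM(\ddI_2^5)$ and is already done in \cite{L20132}. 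This two-case argument handles $k=3$ and $k=4$ uniformly with no explicit orbit count.

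In particular, your worry that for $\ddH_4$ ``the reduction may require the extra relation \eqref{0.1.17} to close'' is unfounded: the paper's proof never invokes \eqref{0.1.17} directly. That relation enters only through Lemma~\ref{grpelements}(ii), which you are already using in your final rewriting step. So your planned case-by-case verification for the $44$ elements of $S$ in $\ddH_4$ is unnecessary work---every base case is either the conjugated \eqref{0.1.10} or an $\ddI_2^5$ identity. Your orbit-representative machinery is correct, but it obscures the simple dichotomy that makes the lemma short.
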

 \begin{proof}
  If $(\beta_1, \beta)=\pm 1$, since there exists such an element $r$ in $W(\ddH_3)$ or $W(\ddH_4)$ that
 $r\{\beta_1,\beta\}=\{\beta_2,\beta_3\}$; then $e_{\beta}e_1=r_1 r_{\beta}e_{1}$ in view of (\ref{0.1.10}). Consequently, the lemma follows from Lemma \ref{grpelements}.
 If $(\beta_1, \beta)\neq \pm 1$, then $\beta$ can be obtained by letting some element from $N_1^3$ or
 $N_1^4$ act on some root of  the linear combination of $\beta_1$ and $\beta_2$; hence it can be reduced to the case of $\BrM(\ddI_2^5)$ which has been
 verified in \cite{L20132}. Therefore the lemma also holds under this condition.
 \end{proof}
 By similar argument, the corollary below holds.
 \begin{cor}\label{ebetae1e3}
  Let $\beta\in \Psi_3^+$ $(\Psi_4^+)$. Then up to some power of $\delta$, there exists some
   $r'\in D_2^3$ $(D_2^4)$, such that
$$e_{\beta}e_1e_3=r' e_1e_3.$$
 \end{cor}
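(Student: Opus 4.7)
The plan is to mimic the proof of Lemma~\ref{ebetae1}, treating $e_3$ as an extra right-multiplier, and to case-split on the position of $\beta$ relative to the orthogonal pair $\{\beta_1,\beta_3\}$. First, if $\beta\in\{\beta_1,\beta_3\}^{\rm cl}$, then either $\beta\in\{\beta_1,\beta_3\}$ (so relation~(\ref{0.1.5}) gives $e_\beta e_1 e_3=\delta^2 e_1 e_3$) or $\beta$ is orthogonal to both $\beta_1$ and $\beta_3$, in which case $\{\beta_1,\beta_3,\beta\}^{\rm cl}=\{\beta_1,\beta_3\}^{\rm cl}$ and Lemma~\ref{cldelta} collapses the product to $\delta^{2m} e_1 e_3$. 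Hence we may assume $\beta\notin\{\beta_1,\beta_3\}^{\rm cl}$.

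The principal case is $\beta\neq\beta_1$ and $\beta\not\perp\beta_1$. Lemma~\ref{ebetae1} yields $e_\beta e_1 = r' e_1 r''$ with $r'\in D_1$ and $r''\in C_1\subseteq N_1$. Since $r''$ stabilises $\beta_1$, Lemma~\ref{lem.ij} gives $r'' e_1 = e_1 r''$, so
\[
e_\beta e_1 e_3 \;=\; r' e_1 r'' e_3 \;=\; r' r'' e_1 e_3,
\]
and Lemma~\ref{grpelements}(iii) applied to the group element $r' r''\in W$ rewrites this as $r''' e_1 e_3$ with $r'''\in D_2$, without introducing any factor of $\delta$.

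The remaining case is $\beta\perp\beta_1$ together with $\beta$ not equal nor orthogonal to $\beta_3$. Then $e_\beta$ commutes with $e_1$, so $e_\beta e_1 e_3 = e_1 e_\beta e_3$, and the analogue of Lemma~\ref{ebetae1} for $\beta_3$---valid by $W$-transitivity on positive roots and conjugation by any $w$ with $w\beta_1=\beta_3$---produces $e_\beta e_3 = \bar r' e_3 \bar r''$. The key technical observation is that the reflections entering this reduction all lie in the dihedral sub-system $\langle r_\beta, r_{\beta_3}\rangle \subseteq W\cap \beta_1^\perp$, so both $\bar r'$ and $\bar r''$ stabilise $\beta_1$ and therefore commute with $e_1$; consequently $e_1 e_\beta e_3 = \bar r' \bar r'' e_1 e_3$, and one further application of Lemma~\ref{grpelements}(iii) finishes the reduction. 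In $\ddH_3$ this last case is vacuous, since the only positive roots orthogonal to $\beta_1$ are $\beta_3$ and $\beta_5$, both lying in $\{\beta_1,\beta_3\}^{\rm cl}$; in $\ddH_4$ it is the delicate step, and verifying that the produced group factors simultaneously stabilise $\beta_1$ (hence can be passed to the left of $e_1$) is the crux.
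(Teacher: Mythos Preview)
Your argument is sound in outline, but it is organised differently from the paper and contains one step that is not quite justified as written.

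The paper does not separate into the cases ``$\beta\not\perp\beta_1$'' versus ``$\beta\perp\beta_1$ but $\beta\not\perp\beta_3$''. Instead it first replaces $e_1e_3$ by the full admissible closure, $e_1e_3=\delta^{-4}e_{\beta_1}e_{\beta_3}e_{\beta_5}e_{\beta_7}$ (in $\ddH_4$; one fewer factor in $\ddH_3$), observes that no positive root is orthogonal to every element of $\{\beta_1,\beta_3,\beta_5,\beta_7\}$, and then picks \emph{any} $\beta_i$ in that set with $\beta\not\perp\beta_i$. The dihedral relation (either (\ref{0.1.10}) for $\ddA_2$ or the $\ddI_2^5$ computation from \cite{L20132}) gives $e_\beta e_{\beta_i}=r\,e_{\beta_i}$ with a group element only on the left; since all remaining $e_{\beta_j}$ already sit to the right, one recollects $e_1e_3$ and applies Lemma~\ref{grpelements}(iii). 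Nothing has to be commuted through an $e$.

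Your case~3 is exactly the place where your route costs more. You invoke ``the analogue of Lemma~\ref{ebetae1} for $\beta_3$'' (obtained by conjugating with some $w$ sending $\beta_1$ to $\beta_3$), which yields $e_\beta e_3=\bar r'\,e_3\,\bar r''$ with $\bar r'\in wD_1w^{-1}$ and $\bar r''\in wC_1w^{-1}$. You then assert that $\bar r',\bar r''$ lie in $\langle r_\beta,r_3\rangle$. But the conjugated lemma does not say this: it only places $\bar r',\bar r''$ in certain coset systems for the stabiliser of $\beta_3$, which need not be contained in the dihedral subgroup. What you actually need is the stronger (and true) fact that the $\ddA_2$ or $\ddI_2^5$ reduction can be carried out entirely inside the rank-two subsystem, giving $e_\beta e_3 = r\,e_3$ with $r\in\langle r_\beta,r_3\rangle$ and \emph{no} right factor; for $\ddA_2$ this is immediate from (\ref{0.1.10}), and for $\ddI_2^5$ one checks it directly from (\ref{0.1.13})--(\ref{0.1.16}) (e.g.\ $e_2e_1=r_1r_2r_1r_2\,e_1$). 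Once you state it this way, your case~3 goes through, since $\langle r_\beta,r_3\rangle$ fixes $\beta_1$ and hence commutes with $e_1$. The paper's closure trick simply avoids having to isolate this point.
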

 \begin{proof}We consider the  corollary for $\Br(\ddH_4)$, and the conclusion for
 $\Br(\ddH_3)$ can be proved similarly.
  By Lemma \ref{cldelta}, we know that $e_1e_3e_{\beta_5}e_{\beta_7}=\delta^4 e_1e_3$.
 No $\beta\in \Psi_4^+$ can be orthogonal to  $\{\beta_1, \beta_3,\beta_5,\beta_7\}$.
 If $\beta\in \{\beta_1, \beta_3,\beta_5,\beta_7\}$, then $e_{\beta}e_1e_3=\delta^2e_1e_3.$
 If $\beta$ is not in
 $\{\beta_1, \beta_3,\beta_5,\beta_7\}$, there exists one $\beta_i$, $i\in \{1,3,5,7\}$, such that $\left<r_\beta,r_{\beta_i}\right>$ is isomorphic to $W(\ddI_2^5)$ or $W(\ddA_2)$; under the conjugation of $W(\ddH_4)$, the corollary follows from \cite{L20132} and (\ref{0.1.10}).
 \end{proof}
 \begin{thm}\label{rewritingforms} For each element in $\BrM(\ddH_k)$ for $k=3$, $4$, up to some power of $\delta$, it can be written as the three forms below,
 \begin{enumerate}[(I)]
 \item $r\in W(\ddH_k)$,
 \item $ue_1v w $, $u\in D_1^k$, $w^{-1}\in D_1^k$, $v\in C_1^k$,
 \item $ue_1e_3 w $, $u\in D_2^k$, $w^{-1}\in D_2^k$.
 \end{enumerate}
 \end{thm}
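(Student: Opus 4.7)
The plan is an induction on the number $n$ of Temperley-Lieb generators $e_i$ appearing in a word for the monomial, driven by Lemmas \ref{grpelements}, \ref{ebetae1}, and \ref{cldelta}.

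For $n=0$, the monomial lies in $W(\ddH_k)$, giving form (I). For $n=1$, use Lemma \ref{lem.ij} to write every $e_j$ as $r e_1 r^{-1}$, reducing the monomial to $h_0 e_1 h_1$ for some $h_0,h_1\in W(\ddH_k)$. Applying Lemma \ref{grpelements}(ii) to $h_0 e_1$ gives $u e_1 v_0$ with $u\in D_1^k$ and $v_0\in C_1^k$; a right-handed analogue of Lemma \ref{grpelements}(ii)---the statement $e_1 s = v' e_1 w$ with $v'\in C_1^k$ and $w^{-1}\in D_1^k$, which follows from the left-handed version by the order-reversing, generator-fixing anti-involution of $\Br(\ddH_k)$ (well-defined because every relation in Definition \ref{0.1} is palindromic)---then puts $e_1(v_0 h_1)$ into the form $v' e_1 w$. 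Since $v_0 v'\in C_1^k$ commutes with $e_1$, the product becomes $u e_1 (v_0 v') w$, which is form (II) with $v=v_0 v'$.

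For $n\geq 2$, use Lemma \ref{lem.ij} in the form $g e_\beta = e_{g\beta} g$ to slide every group element past every $e$, rewriting the monomial as $e_{\gamma_1}\cdots e_{\gamma_n}\cdot g$. Two adjacent $e$'s can be collapsed whenever possible: if $\gamma_i=\gamma_{i+1}$, use $e_{\gamma_i}^2=\delta^2 e_{\gamma_i}$; if $\gamma_i$ and $\gamma_{i+1}$ are neither equal nor orthogonal, conjugate so that $\gamma_i=\beta_1$ and apply Lemma \ref{ebetae1}. Commuting $e$'s can be swapped freely, so any two non-orthogonal $\gamma_i,\gamma_j$ can be brought into adjacency and collapsed; each collapse drops $n$ by one, and the induction hypothesis closes that branch. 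If no collapse is possible, then $B:=\{\gamma_1,\ldots,\gamma_n\}$ is a mutually orthogonal subset of $\Psi_k^+$. In this orthogonal case, transitivity of $W(\ddH_k)$ on mutually orthogonal subsets of equal cardinality (noted in Section 3) gives $h\in W(\ddH_k)$ with $hB\subseteq\{\beta_1,\beta_3,\beta_5,\beta_7\}$, and Lemma \ref{cldelta}, applied with the admissible closures $\{\beta_1,\beta_3\}^{\rm cl}=\{\beta_1,\beta_3,\beta_5\}$ for $\ddH_3$ and $\{\beta_1,\beta_3\}^{\rm cl}=\{\beta_1,\beta_3,\beta_5,\beta_7\}$ for $\ddH_4$, reduces $e_{hB}$ to $\delta^M e_1 e_3$ for some integer $M$. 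The monomial becomes $\delta^M (h_0 h^{-1}) e_1 e_3 (h g)$, and Lemma \ref{grpelements}(iii) on the left together with its right-handed analogue (via the same anti-involution argument) on the right yields form (III).

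The main obstacle will be the bookkeeping in the $n\geq 2$ case: one must verify that repeated applications of Lemma \ref{ebetae1} together with the reshuffling of commuting $e$'s genuinely terminate in either a pure orthogonal configuration or a strictly shorter word, while correctly tracking all the ambient $\delta$-powers. Equally, when passing from $h_0 e_1 h_1$ (or $h_0 e_1 e_3 h_1$) to the final normal form, one must ensure that the left-handed and right-handed rewrites can be composed without the middle factors escaping $C_1^k$ (respectively, the stabilizer of $\{\beta_1,\beta_3\}$ modulo scalar action on $e_1 e_3$). Both issues reduce to the defining properties of $D_1^k$, $C_1^k$, $D_2^k$, and the stabilizers $N_i^k$, but the inductive argument must thread these reductions with care.
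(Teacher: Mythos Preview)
Your approach is correct but organized differently from the paper's. The paper first establishes the anti-involution $\cdot^{\rm op}$---taking care to verify that relation (\ref{0.1.17}) is $\rm op$-invariant, which is not automatic: it uses that $(r_5r_3r_4)^5$ is an involution commuting with $e_1$, so that $e_1(r_5r_3r_4)^5=(r_5r_3r_4)^5e_1=((r_5r_3r_4)^5)^{\rm op}e_1=(e_1(r_5r_3r_4)^5)^{\rm op}$---and then argues by \emph{closure}: since $1$ is already a normal form, it suffices to show that the set of monomials of shapes (I)--(III) is closed, up to powers of $\delta$, under left multiplication by each $r_i$ and each $e_\beta$. That closure is then read off from Lemmas \ref{cldelta}, \ref{grpelements}, \ref{ebetae1} and Corollary \ref{ebetae1e3}.

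Your induction on the number of Temperley-Lieb letters is a legitimate alternative that reduces words directly rather than checking closure. In particular you never need Corollary \ref{ebetae1e3}: your ``orthogonal'' branch reaches form (III) via Lemma \ref{cldelta} and Lemma \ref{grpelements}(iii) instead, while the paper uses Corollary \ref{ebetae1e3} precisely to show that $e_\beta\cdot(\text{form (III)})$ stays in form (III). The price you pay is the extra combinatorics needed to justify that a non-orthogonal pair $\gamma_i,\gamma_j$ can always be brought into adjacency (take $j$ minimal with $\gamma_j$ non-orthogonal to some earlier $\gamma_i$; then $\gamma_1,\ldots,\gamma_{j-1}$ are mutually orthogonal and can be freely permuted to place $\gamma_i$ next to $\gamma_j$). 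One point to tighten: your claim that ``every relation in Definition \ref{0.1} is palindromic'' is not literally true for (\ref{0.1.17}); its $\rm op$-invariance requires exactly the short computation the paper supplies, and you should include it.
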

 \begin{proof}
 As in\cite{CFW2008}, \cite{CLY2010} and \cite{CL2012}, we can define a natural involution on $\Br(\ddH_k)$, by reversing the each monomial in $\BrM(\ddH_k)$, denoted
 by $\cdot^{\rm op}$.
 It can be verified that $\cdot^{\rm op}$ induces a natural isomorphism on $\Br(\ddH_k)$ and the difficult one  is (\ref{0.1.17}). By Lemma \ref{lem.ij} and Definition \ref{0.1}, we know that
 $e_1$ commutes with $r_3$, $r_4$, $r_5$. Since $(r_5r_3r_4)^5$ has order 2 we have that
 $((r_5r_3r_4)^5)^{\rm op}= (r_5r_3r_4)^5$. Then the left side of (\ref{0.1.17}) can be written as
 $$e_1(r_5r_3r_4)^5=(r_5r_3r_4)^5e_1=(r_5r_3r_4)^5)^{\rm op}e_1= (e_1(r_5r_3r_4)^5)^{\rm op}.$$ Consequently,
 the  equality (\ref{0.1.17}) still holds after application of  $\cdot^{\rm op}$.

 Note that $1$ is a normal form. To prove the theorem,  it suffices to
 prove that the above forms are closed under  left multiplication by $r_i$ for $i=1$, $\ldots$, $k$ and by  $e_{\beta}$ for $\beta\in \Psi_k^+$. Hence the lemma
 follows from Lemmas \ref{cldelta}, \ref{grpelements}, \ref{ebetae1} and Corollary \ref{ebetae1e3}.
\end{proof}
 \begin{rem} The numbers of the normal forms in the above are
 \begin{eqnarray*}
 120+15^2\times 4 +5^2&=&1045,\\
 14400+60\times 60^2+75^2&=&236025
 \end{eqnarray*}
 for $\Br(\ddH_3)$ and $\Br(\ddH_4)$ respectively.
 \end{rem}
 \section{Images of $\phi_1$ and $\phi_2$}

 To prove both of them are injective, we need to recall some results from \cite{CFW2008}, \cite{CGW2009}.
 Let  $\{\alpha_i\}_{i=1}^{6}$ be the simple roots of $W(\ddD_6)$ (Weyl group of type $\ddD_6$) corresponding to the diagram of $\ddD_6$ in figure \ref{H3H4}, and let
 $\Phi_6^+$ be the positive root of  $W(\ddD_6)$.
 From \cite[Proposition 4.9, Proposition 4.1]{CFW2008}, up to some power of $\delta$,
  there is a unique normal  form
 associated to  admissible roots of type $\ddD_6$,
 which are the orbits of $B_0^3=\emptyset$, $\{\alpha_2\}$, $B_1^3=\{\alpha_2, \alpha_4\}$, $\{\alpha_2, \alpha_4, \alpha_6\}$
 and $B_2^3=\{\alpha_1,\alpha_2, \alpha_4,\alpha_4^*,  \alpha_6,\alpha_6^*, \}$  for each element of $\BrM(\ddD_6)$,
where $\alpha^*$ is the orthogonal mate (see in \cite{CGW2009}) for each positive root $\alpha$ of type $\ddD_6$.
Now we prove the injectivity of $\phi_1$ and Theorem \ref{mainthm1} by  analyzing  the image of each form in Theorem \ref{rewritingforms}.
\begin{proof} It can be verified as in \cite{CLY2010} and \cite{L20132} that $\phi_1$  is an algebra homomorphism.
By \cite{M1992}, the normal forms in (I) of Theorem \ref{rewritingforms} is embedded into $\BrM(\ddD_6)$ by $\phi_1$.\\
It can be verified that  that $\phi_1(r_5)=R_4^*R_6^*$, where $R_4^*$ and $R_6^*$ are reflections corresponding to $\alpha_4^*$ and
$\alpha_6^*$. By the diagram representations for $\Br(\ddD_6)$ in \cite{CGW2009}, $\phi_1(C_1^{3})=\left<R_1R_6,R_4^*R_6^* \right>$ is embedded into $W(C_{WB_1^3})$, the commutator subgroup of $B_1^3$(\cite{CFW2008}).
Since $\phi_1(e_1)=E_2E_4$, and $\phi_1(W(\ddH_3))B_1^3$ has $15$ different subsets of $\Phi_6^+$, therefore the   normal forms in (II) of Theorem \ref{rewritingforms} is embedded into the cell associated to $\{\alpha_2,\alpha_3\}$. \\
The   normal forms in (III) of Theorem \ref{rewritingforms} is embedded  into the cell associated to $B_2^3$, for $\phi_1(e_1e_3e_{\beta_5})=E_{B_2^3}$, and $\phi_1(W(\ddH_3))B_2^3$ has $5$ different subsets of $\Phi_6^+$.\\
Therefore $\phi_1$ is an injective homomorphism.
\end{proof}
Before proving the Theorem \ref{mainthm2}, we need to recall some results in \cite{CW2011}.
We keep  notation as  in \cite[Section 2]{CW2011} and first introduce some basic concepts.
Let $M$ be  the  diagram  of a  connected finite simply laced Coxeter
group (type $\ddA$, $\ddD$, $\ddE_6$,  $\ddE_7$,  $\ddE_8$).
$\BrM(M)$ is the associated Brauer monoid as in \cite{CFW2008}.
An element  $a\in \BrM(M)$ is said to be of \emph{height} $t$ if the minimal number of
 $R_i$  occurring in an expression of $a$ is $t$, denoted by $\rm{ht}$$(a)$. By $B_Y$ we denote
the admissible closure (\cite{CFW2008}) of $\{\alpha_i|i\in Y\}$, where $Y$ is a coclique
of $M$. The set $B_Y$ is a minimal element in the $W(M)$-orbit of $B_Y$ which is endowed with a  poset  structure
induced by the partial ordering $<$ (\cite{CGW2006})
defined on $\cA$ (the set of all admissible sets). If $d$ is the Hasse diagram distance for $W(M)B_Y$
from $B_Y$ to the unique maximal element
(\cite[Corollary 3.6]{CGW2006}), then for $B\in W(M)B_Y$ the height of $B$,
notation $\rm{ht}$$(B)$, is $d-l$, where $l$ is the distance in the
Hasse diagram from $B$ to the maximal element. \\
In \cite{CFW2008}, a Brauer  monoid action is defined as  follows.
For any mutually orthogonal positive root set $B$, we define $B^{\rm cl}$ to be  the \emph{admissible closure} of
$B$, namely the minimal admissible root set(\cite{CFW2008}) containing $B$.
The generator $R_{i}$ acts
by the natural action of Coxeter group elements on its
root sets, where negative roots are negated so as to obtain positive roots,
the element $\delta$ acts as the identity,
and the action of $\{E_{i}\}_{i=1}^{n+1}$ is defined below.
\begin{equation}
E_i B :=\begin{cases}
B & \text{if}\ \alpha_i\in B, \\
(B\cup \{\alpha_{i}\})^{\rm cl} & \text{if}\ \alpha_i\perp B,\\
R_\beta R_i B & \text{if}\ \beta\in B\setminus \alpha_{i}^{\perp}.
\end{cases}
\end{equation}

By the natural involution, we can define a right monoid action of $\BrM(M)$ on $\cA$. \\
Considering our $\Br((\ddH_4)$ and table 3  in \cite{CW2011},
let  $$Y\in \mathcal{Y}=\{\emptyset, \{2\}, \{2,5\}, \{2, 3, 5,7\} \}$$
for type $\ddE_8$.
From \cite[Theorem 2.7]{CW2011}, it is known that each monomial  $a$ in $\BrM(\ddE_8)$ can be
uniquely written as $\delta^{i} a_{B} \hat{e}_Y h a_{B'}^{\rm op}$ for some $i\in \Z$ and $h\in W(M_{Y})$ in \cite[table 3]{CW2011},
where $B=a\emptyset$, $B^{'}=\emptyset a $, $a_{B}\in \BrM(\ddE_8)$, $a_{B'}^{\rm op}\in \BrM(\ddE_8)$ and
\\ (i) $a\emptyset=a_{B}\emptyset=a_{B}B_Y$,  $\emptyset a= \emptyset a_{B'}^{\rm op}= B_Y a_{B'}^{\rm op}$,
\\(ii) $\rm{ht}$$(B)=$\rm{ht}$(a_{B})$, $\rm{ht}$$(B')=$\rm{ht}$(a_{B'}^{\rm op})$. \\
In view of $E_5E_6E_7E_2E_4E_5\{\alpha_6, \alpha_8\}=\{\alpha_2, \alpha_5\}$ and $\rm ht$$E_5E_6E_7E_2E_4E_5=0$, hence
\begin{eqnarray*}
&&W(M_{\{2,5\}})=E_5E_6E_7E_2E_4E_5W(M_{\{6,8\}})E_5E_4E_2E_7E_6E_5\cong W(\ddA_5)\\
&&=\left<\hat{E}_2\hat{E}_5R_3,\hat{E}_2\hat{E}_5R_1,\hat{E}_2\hat{E}_5R_8E_6E_4E_5E_7R_3E_4E_6E_5E_2,
 \hat{E}_2\hat{E}_5R_8,\hat{E}_2\hat{E}_5R_7 \right>,
\end{eqnarray*}
 where $\hat{E}_i=\delta^{-1} E_i$, and $W(M_{\{2,3,5,7\}})$ is the trivial group.
Let $R_1'=\hat{E}_2\hat{E}_5R_3$, $R_2'=\hat{E}_2\hat{E}_5R_1$, $R_3'=\hat{E}_2\hat{E}_5R_8E_6E_4E_5E_7R_3E_4E_6E_5E_2$,
 $R_4'=\hat{E}_2\hat{E}_5R_8$, $R_5'=\hat{E}_2\hat{E}_5R_7$. They can be considered the natural generators of
 $W(\ddA_5)$ with their indices by use of  \cite[Proposition 4.3]{CW2011}.
 Let $\alpha'=2\alpha_4+\alpha_2+\alpha_3+\alpha_5$ and $\alpha''=\alpha_2+\alpha_3+2\alpha_4+2\alpha_5+2\alpha_6+\alpha_7$.
Considering  the natural embedding of
 $W(\ddH_4)$ into $W(\ddE_8)$ through $\phi_2$, it can be checked that
 $\phi_2(r_5)=R_{\alpha'}R_{\alpha''}$, and  $\phi_2(e_1r_5)=E_2E_5R_{\alpha'}R_{\alpha''}=\delta^2 R_1'R_3'$.
After these preparations, now we can start to prove Theorem \ref{mainthm2}.
\begin{proof}  As in proving $\phi_1$ is an algebra homomorphism,  it can be verified that (\ref{0.1.1})-(\ref{0.1.16}) still hold under
$\phi_2$. In view of each element in $\{\alpha_1,\alpha_3, \alpha',\alpha'',\alpha_7, \alpha_8\}$ is orthogonal to
$\{\alpha_2, \alpha_5\}$, hence
\begin{eqnarray*}
\phi(e_1(r_5r_3r_4)^5)&=&E_2E_5(R_{\alpha'}R_{\alpha''}R_3R_7R_1R_8)^5\\
&=&\delta^2  (\hat{E}_2\hat{E}_5R_{\alpha'}\hat{E}_2\hat{E}_5R_{\alpha''}\hat{E}_2\hat{E}_5R_3\hat{E}_2\hat{E}_5R_7\hat{E}_2\hat{E}_5
R_1\hat{E}_2\hat{E}_5R_8)^5\\
&=&\delta^2(R_1'R_3'R_1'R_5'R_2'R_4')^5\\
&=&\delta^2(R_3'R_5'R_2'R_4')^5,
\end{eqnarray*}
applying the M\"{u}hlherr's partition for $W(\ddI_2^5)$ in $W(\ddA_4)$ (with generators $\{R_i'\}_{i=2}^5$), the above is
equals to $\delta^2 1_{W(M_{\{2,5\}})}=\delta^2 \hat{E}_2\hat{E}_5= E_2E_5=\phi_2(e_1)$, therefore the equality (\ref{0.1.17}) holds under $\phi_2$.\\
The following is dedicated to prove the injectivity of  $\phi_2$.
 By \cite{M1992}, the  normal  forms in (I) of  Theorem \ref{rewritingforms} is embedded into $W(\ddE_8)$
or the the cell associated to $\emptyset$ by $\phi_2$. \\
It can be checked that $\phi_2(W(\ddH_4))\{\alpha_2,\alpha_5\}\subset \cA$ has cardinality $60$, by the above we see that
there  is a group homomorphism
$C_1^4\rightarrow W(M_{\{2,5\}})$ defined by $x\rightarrow \delta^{-2}\phi_2(e_1x)$,
with the image being  the subgroup with cardinality $60$,  generated by $\{R_1'R_3', R_2'R_4', R_1'R_5'\}$ in the group $W(M_{\{2,5\}})$.
Since  $\#C_1^4=60$, therefore it is an isomorphism.   The  normal forms in (II) of Theorem \ref{rewritingforms} is embedded into the
cell in $W(\ddE_8)$  associated to $\{\alpha_2,\alpha_5\}$. \\
It can be also verified that $\phi_2(W(\ddH_4))\{\alpha_2,\alpha_5,\alpha_3,\alpha_7\}^{\rm cl}\subset \cA$ has cardinality $75$, hence  the  normal  forms of (III) in Theorem \ref{rewritingforms} is embedded into the
cell  in $W(\ddE_8)$  associated to $\{\alpha_2,\alpha_5,\alpha_3,\alpha_7\}^{\rm cl}$.
\end{proof}
\begin{rem} The isomorphism from $C_1^4\cong W(\ddH_3)/\left<(r_3r_5r_4)^5\right>$ to $W(M_{\{2,5\}})\cong W(\ddA_2)$, can be considered induced from the composition of canonical
homomorphism $$W(\ddH_4)\overset{g}{\longrightarrow} W(\ddD_6)\overset{f}{\longrightarrow} W(\ddA_5)$$ with $\left<(r_3r_5r_4)^5\right>$ being the kernel of $fg$.
\end{rem}
Just as in \cite{BO2011} and \cite{CL2012}, the  theorem below  about the cellularity can be obtained by the similar argument.
\begin{thm} If $R$ is a field such that   the group rings $R[W(H_3)]$ and  $R[W(H_4)]$  are cellular algebra (\cite{Gra} and \cite{GL1996}), then
the algebras $\Br(\ddH_3)\otimes R$ and $\Br(\ddH_4)\otimes R$ are  cellularly stratified algebras (\cite{HHKP2010}).
\end{thm}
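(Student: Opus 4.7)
The plan is to follow the strategy used in \cite{CL2012} for the type $\ddB$ Brauer algebra (itself modelled on \cite{BO2011}). First I would exploit Theorem \ref{rewritingforms} to build a descending filtration of two-sided ideals
$$\Br(\ddH_k)\otimes R \,=\, J_0 \,\supseteq\, J_1 \,\supseteq\, J_2 \,\supseteq\, J_3 \,=\, 0,$$
where $J_l$ is the $R$-span of all monomials in $\BrM(\ddH_k)$ whose normal form contains at least $l$ Temperley--Lieb generators $e_i$. The idempotency relations (\ref{0.1.5}), (\ref{0.1.14})--(\ref{0.1.16}) together with Lemmas \ref{grpelements}, \ref{ebetae1} and Corollary \ref{ebetae1e3} guarantee that each $J_l$ is a two-sided ideal and that the three layers correspond bijectively to the three $W(\ddH_k)$-orbits of admissible root sets: the empty set, the singletons, and the orthogonal bases.

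Next I would identify each layer $J_l/J_{l+1}$ with an inflation $V_l \otimes_R V_l^{\ast} \otimes_R B_l$ in the sense of \cite{HHKP2010}, reading off the data directly from Theorem \ref{rewritingforms}. The space $V_l$ has $R$-basis given by the coset representatives $\{1\}$, $D_1^k$, or $D_2^k$ in cases (I), (II), (III) respectively, while the middle algebra $B_l$ is extracted from the inner conjugation of the corresponding stabilizer on $e_1$ or $e_1e_3$. This yields $B_0 = R[W(\ddH_k)]$, $B_1 \cong R[C_1^k]$, and $B_2$ the group algebra of the effective quotient of $N_2^k$ acting on $e_1 e_3$. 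The involution $\cdot^{\op}$ defined in the proof of Theorem \ref{rewritingforms} swaps $V_l$ with $V_l^{\ast}$ and descends to an anti-involution on each $B_l$, supplying the required cellular datum.

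The axioms of a cellularly stratified algebra then reduce to showing that each $B_l$ is itself cellular. Here the hypothesis enters directly: $B_0$ is cellular by assumption. For $\ddH_3$, $B_1 \cong R[W(\ddA_1)^2]$ and $B_2$ is a group algebra of a small permutation group on an orthogonal basis, both cellular since tensor products of cellular algebras and type-$\ddA$ group algebras are always cellular by \cite{Gra}, \cite{GL1996}. For $\ddH_4$, the remark following Theorem \ref{mainthm2} identifies $C_1^4 \cong W(\ddH_3)/\left<(r_3r_5r_4)^5\right>$ with $W(\ddA_5)$, so $B_1 \cong R[W(\ddA_5)]$ is a symmetric group algebra and cellular, and $B_2$ reduces analogously to a small cellular group algebra.

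The main obstacle I anticipate is the careful verification of the compatibility axioms of \cite{HHKP2010}: one must show that a suitably normalized idempotent ($e_1$, respectively a scalar multiple of $e_1 e_3$) realizes the quotient $J_l/J_{l+1}$ as a genuine inflation of $B_l$, and that the bilinear form on $V_l$ encoding the multiplication across the filtration is compatible with the cellular basis chosen for $B_l$. Thanks to Theorem \ref{rewritingforms} and the closely parallel computations already performed for types $\ddC$, $\ddB$ and $\ddF$ in \cite{CLY2010}, \cite{CL2012} and \cite{L2013}, this amounts to a routine but lengthy transcription using the explicit generators, cosets and rewriting rules assembled in Section 3, and should present no qualitatively new difficulty.
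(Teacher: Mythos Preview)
Your overall strategy is exactly the one the paper invokes: the paper gives no independent argument and simply says that the result ``can be obtained by the similar argument'' as in \cite{BO2011} and \cite{CL2012}, i.e.\ by reading off an iterated inflation from the normal forms of Theorem \ref{rewritingforms}. Your description of the filtration $J_0\supseteq J_1\supseteq J_2\supseteq 0$, of the spaces $V_l$ indexed by $D_l^k$, and of the role of the anti-involution $\cdot^{\op}$ is the intended one, and the identification $B_2=R$ (trivial) is correct since the normal form in case (III) carries no middle group element.

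There is, however, one concrete slip in your $\ddH_4$ analysis. You write that the remark after Theorem \ref{mainthm2} ``identifies $C_1^4\cong W(\ddH_3)/\langle(r_3r_5r_4)^5\rangle$ with $W(\ddA_5)$, so $B_1\cong R[W(\ddA_5)]$.'' This is a misreading: $\#C_1^4=60$, while $\#W(\ddA_5)=720$. What the paper actually establishes (in the proof of Theorem \ref{mainthm2}) is that $x\mapsto\delta^{-2}\phi_2(e_1x)$ embeds $C_1^4$ as the subgroup of order $60$ generated by $R_1'R_3'$, $R_2'R_4'$, $R_1'R_5'$ inside $W(M_{\{2,5\}})\cong W(\ddA_5)$. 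In other words $C_1^4\cong W(\ddH_3)/Z(W(\ddH_3))\cong A_5$, the alternating group, not the symmetric group $S_6$. Consequently your justification ``$B_1$ is a symmetric group algebra and hence cellular'' does not apply as stated; you need instead to argue that $R[A_5]$ is cellular under the standing hypothesis on $R$. Since $W(\ddH_3)\cong\Z/2\times A_5$, the assumed cellularity of $R[W(\ddH_3)]$ is the natural input here, but you should make that reduction explicit rather than appeal to a Specht-module argument that belongs to $S_6$.
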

\begin{rem} Up to some coefficients, $\Br(\ddH_3)$ is isomorphic to $\Br_{\ddG_{\ddH_3}}(\gamma)$. Up to some coefficients, we can obtain  $\Br_{\ddG_{\ddH_4}}(\gamma)$ by (\ref{0.1.1})-(\ref{0.1.16}) in Definition \ref{0.1}, which can be proved to have  rank  $452025$
by modifying the $C_1^4$ to  $\left<r_3,r_4, r_5\right>$ and the analogue  argument as in \cite{ZhiChen} for  $\Br_{\ddG_{\ddH_3}}(\gamma)$. Furthermore, for $\Br(\ddH_3)$, a faithful diagram representation can be 
obtained through the diagram representation of Brauer algebra of type $\ddD_n$ from 
\cite{CGW2009}.
\end{rem}

\end{document}